%-----------------------------------------------------------------------------
%
%  Approximation operators for matrix-valued functions based on matrix decompositions
%
%  JAT/ArXiv Nov 2016
%
%-----------------------------------------------------------------------------

\documentclass[11pt]{article}
\usepackage{graphicx, amsmath, amssymb, amsthm}
\usepackage[margin=2cm]{geometry}
\usepackage{commath}       
\usepackage{amsfonts}

\usepackage[margin=2cm]{geometry}
\usepackage{setspace}

%------ EXTRA  --------------
\usepackage{algorithmic}
\usepackage{algorithm}
\usepackage{placeins}
\usepackage{authblk}       			  % title
\usepackage{subfigure}

%****************** Hyper Links PDF ***************************
\usepackage{hyperref}                 % Note - it requires to COMPILE TWICE !!

%------------- notation ----------------
\newcommand{\spd}{{\cal{SPD}} (n)}
\newcommand{\spdn}[1]{{\cal{SPD}} {(#1)}} 
\newcommand{\so}{{\cal{SO}} (n)}
\newcommand{\son}[1]{{\cal{SO}} {(#1)}} 
\newcommand{\gl}[1]{{\cal{GL}} {#1}} 
\newcommand{\ltri}{{\cal{L}} (n)}
\newcommand{\utri}{{\cal{U}} (n)}
\newcommand{\dMat}{{\cal{D}} (n)}
 
\newcommand{\gmat}{{\mathsf{M}}}

\DeclareMathOperator{\tr}{tr}
\DeclareMathOperator{\diag}{diag}

\def\Acknowledgement{\goodbreak\bigskip\noindent{\bf Acknowledgement.\ }}

%----------- environments ---------------
\newtheorem{theorem}{Theorem}[section]
\newtheorem{lemma}[theorem]{Lemma}
\newtheorem{corollary}[theorem]{Corollary}
\newtheorem{proposition}[theorem]{Proposition}
\newtheorem{definition}{Definition}[section]
\newtheorem{remark}{Remark}[section]
\newtheorem{example}{Example}[section]

\onehalfspacing

%----------------------------------------------------------------
\begin{document}
%---------------------------------------------------------------

% %------- For editting--------
% \setcounter{page}{0}
% \newpage
% \tableofcontents
% \newpage
% %-------------------------

%***************** Title *****************************************
\title{Approximation operators for matrix-valued functions based on matrix decompositions}

\author[1]{Nira Dyn\thanks{niradyn@post.tau.ac.il} }
\author[ ]{Uri Itai\thanks{uri.itai@gmail.com}}
\author[2]{Nir Sharon\thanks{nsharon@math.princeton.edu} }
\affil[1]{\footnotesize{School of Mathematical sciences, Tel-Aviv University, Tel-Aviv, Israel}}
\affil[2]{\footnotesize{The Program in Applied and Computational Mathematics, Princeton University, Princeton NJ, USA}}

\date{}
\maketitle
%***************************************************************

%***************** Abstract ************************************
\begin{abstract}
Given a set of matrices, modeled as samples of a matrix-valued function, we suggest a method to approximate the underline function using a product approximation operator. This operator extends known approximation methods by exploiting the structure of the matrices in the samples set, and based on decomposition theorems. We introduce our approach in detail and discuss its advantages using a few examples. In addition, we provide basic tools for analyzing properties of the matrix functions generated by our approximation operators.
\end{abstract}
\textbf{Key Words.} Approximation of matrix-valued functions, matrix decomposition, product approximation operator  % positive definite matrices, orthogonal matrices}
%\subclass{MSC 41A65 \and MSC 15B99 }   % if needed add: \and 41A05}

%================================ Introduction ===============================

\section{Introduction}

Matrices are ubiquitous in various fields of mathematics, engineering and science, and so are the variety of related mathematical concepts. In particular, many applications from classical geophysics \cite{elliott1970determination} through computer graphics \cite{chefd2002constrained} to modern medical imaging \cite{arsigny2007geometric}, use matrix-valued functions (MVFs) that have to be approximated from a finite set of samples.

A matrix-valued function is a mapping $ f \colon \mathbb{R}^d \to \gmat $, where $\gmat$ is a class of matrices. In this paper we address the problem of approximating univariate MVFs ($d=1$). Specifically, we interest in approximations of MVFs which retain some of the special properties of the matrix class $\gmat$. 

Interpolation of MVFs is not a new concept. In \cite{beals1984scattering,delsarte1979nevanlinna}, several smooth solutions are presented and analyzed. The analysis includes many important results such as conditions for existence and uniqueness, but with only a minor focus on the algebraic and geometric features of the interpolant. Additional papers formulate the analogous of classical approximation tools, such as Taylor approximation for MVFs \cite{mathias1993approximation}, and the study of the polynomial interpolation and Guassian curvature for MVFs \cite{sinap1994polynomial}. Approximation of MVFs is also popular in other fields of applied research. For instance, in the design of electronic circuits, \cite{feldmann1995reduced} study an adaptation of Pad\'e approximation for matrices to compute models of linear circuits. 

For matrix classes equipped with the structure of a Riemannian manifold, many more approximation methods are available as the approximation of manifold-valued data have become popular in recent years, e.g., \cite{shingel2009interpolation,WallnerNiraProx,ThomasLie2}. These papers use different techniques for adapting advanced approximation operators, such as subdivision schemes, from numbers to elements on a manifold. Inspired by the variety of solutions, we wish to use the available operators, typically defined on special structured matrix classes or matrix manifolds, and to extend them to additional matrix classes using matrix decompositions.

Applying matrix decomposition to problems in approximation of MVFs is not a new concept. An important example for such an application is the construction of a continuous extension for the SVD decomposition presented in \cite{bunse1991numerical}. This paper opened the door for many computational techniques for calculating this continuous extension and its use for approximation too, e.g. see \cite{ay2010analytic} and reference therein. Additional examples are in \cite{dieci1999smooth} where they use several types of matrix decompositions to construct smooth continuous extensions, and in \cite{chen2003analysis} where thay apply the spectral decomposition for the analysis of a certain class of MVFs. In computer graphics, \cite{Alexa2000} introduces the notion of ``as-rigid-as-possible" for transforming shapes. One interpretation of this approach is the ability of factorizing the matrix that represents a transformation by the polar decomposition for applying a linear interpolation which describes the transition between this matrix and the unit matrix. By doing so, they utilize matrix decomposition to control certain properties related to the matrices in use. This approach best resembles the spirit of our construction.

\subsection{Problem formulation and our approach} 

The problem discussed in this paper is as follows. Let $\mathbf{A} = \{ A^{(i)} \}_{i \in \mathcal{J} }$, $\mathcal{J} \subset \mathbb{Z}$, be a sequence of matrices from the matrix class $\gmat$, sampled from a MVF, $f \colon \mathbb{R} \to \gmat$, on a compact segment $\mathtt{I} \subset \mathbb{R} $. Namely, 
\[  f(t_i) = A^{(i)} , \quad i \in \mathcal{J},  \quad t_i \in \mathtt{I}  , \] 
 with $t_i<t_{i+1}$, $i \in \mathcal{J}$. Fit a continuous MVF, $\Gamma \colon \mathtt{I} \to \gmat$ such that 
\begin{enumerate}
\item (Approximation)
\[\Gamma(t) \approx f(t), \quad t \in \mathtt{I}.\]
\item (Preservation of properties)
The matrices $\Gamma(t)$, $t \in \mathtt{I}$ have a set of predetermined properties of the matrices $A^{(i)}$, $i \in \mathcal{J}$. For example, positive determinant
$\det(A^{(i)})>0$, $i \in \mathcal{J} $ leads to  $\det(\Gamma(t))>0$, $t \in \mathtt{I}$. 
\end{enumerate}
We address this problem from a high-level point of view, as we construct our approximation operators based on other approximation operators that are available on other, potentially simpler domains. This is done using matrix decompositions.

Matrix decompositions are powerful tools in any practical area that involves matrices. For a given problem, we choose a specific decomposition to satisfy two main requirements. First is to preserve the invariant found in the sampled MVF. Second, the decomposition needs to reduce the approximation problem by solving it for each component separately. The assumption is that on these domains we are equipped with well-established approximation operators. The resulting operator is termed \textit{product approximation operator}. We study its construction, demonstrate it on several cases, and develop tools for analysis. In our approach the quality of $\Gamma(t)$ should not measured solely by the distance to $f(t)$ (the approximation error) but also by the invariant properties of $f(t)$ that are being preserved in $\Gamma(t)$. 

In addition, we show one variant of our construction, where we use matrix decomposition to reduce the approximation problem to easier domain, not in a product fashion but rather in a ''square root`` manner. By doing it we manage to relax a challenging task as approximating MVFs over triangular matrices by mapping it to a much easier space.

\subsection{Outline} 

The paper is organized as following. Section~\ref{sec:Theoretical_Background} provides the notation and theoretical background, including a short survey on some of the available approximation operators for MVFs. In Section~\ref{sec:app_Schemes} we introduce our construction of product approximation operators, to be discussed throw examples in Section~\ref{sec:TwoExamples} where we describe the specific construction of three such operators. In Section~\ref{sec:noteOnAnlaysis} we provide some fundamental tools for analysis of our product operators. We conclude the paper in Section~\ref{sec:triangular_matrices} with an interesting variant of our method for triangular matrices.

%========================= Theoretical Background ===============================

\section{Theoretical background} \label{sec:Theoretical_Background}

%---------------------------------------------------------
\subsection{Notation}

We begin with a few elementary notations and definitions related to matrices that we give here. We denote by 
$A^\ast$ the standard transpose, $A^\ast_{i,j} = (A_{j,i})^\ast$. The standard trace operator is given by $\tr(A) = \sum_{i=1}^n A_{i,i}$, and $\diag(A)=(A_{1,1},\ldots,A_{n,n})$ is the vector of the diagonal elements of $A$. For a square $n \times n$ matrix $A$, the determinant is $\det(A)$. In Table \ref{tab:NotationMatrixSpaces} we include the notation we use for several important classes of matrices. Unless otherwise stated, we consider matrices with real elements. A general matrix class is denoted by $\gmat$, usually consists of square matrices. If we want to emphasize the order we use $\gmat(n)$ or $\gmat(n,m)$ for non-square matrices of order $n \times m$.

Throughout the paper, we use the bold notation $\mathbf{A} $ for the sequence of data $\{ A^{(i)} \}_{i \in \mathcal{J}}$ from the matrix class $\gmat$. Similarly, when two or more matrix classes are discussed, we use the notation $\mathbf{A}_j = \{ A_j^{(i)} \}_{i \in \mathcal{J}}$ for the data in the class $\gmat_j$.

%\begin{center}
\begin{table}[ht] \centering
  \begin{tabular}{c|c }
    The notation &  The class of \\\hline 
      $\spd$ & symmetry positive definite matrices \\ 
      $\so$  & special orthogonal matrices with determinant $1$ \\
      $\gl(n)$   & invertible matrices \\
     $\ltri$ ($\utri$) & invertible, lower (upper) triangular matrices \\
%     $\utri$  & upper, invertible triangular matrices \\
     $\dMat$    & diagonal matrices
  \end{tabular}
  \caption{Notation for special classes of square matrices of order $n$.}
  \label{tab:NotationMatrixSpaces}
\end{table}
%\end{center}

%---------------------------------------------------------
\subsection{Approximation of MVFs} \label{subsec:approx_MVFS}

The classical computation methods for the approximation of a univariate function from its samples, such as polynomial or spline interpolation, are linear. These methods cannot cope with most of MVFs defined over non-linear classes of matrices, e.g., the class (cone) of positive define matrices. Thus, the available methods for such matrix-valued data are based on different adaptations of the linear methods.

There are several different methods for the adaptation of a sampled based linear approximation operator to matrix-valued samples. Here we present three ``popular" methods, all ``intrinsic" to the underline matrix class, which means that the generated matrices in the approximation are guaranteed to belong to the same original matrix class.

First is the log-exp mappings, defined for the classes of matrix Lie groups, e.g., \cite{WallnerNiraProx}. This method consists of three steps: projecting the samples into the corresponding Lie algebra, applying the linear operator to the projected samples in the Lie algebra, and projecting the approximant back to the Lie group. There are several computational difficulties in the realization of this ``straightforward" idea, mainly in the evaluation of the logarithm and exponential of a matrix, see e.g., \cite{shingel2009interpolation}. We briefly discuss these issues in Section \ref{sec:triangular_matrices}. 

A similar idea applies for local approximations, such as the output of subdivision schemes, where the approximant at a given point depends only on samples in the neighborhood of the point. In such a setting, the exp-log method applies, with the Lie algebra replaced by the tangent space at a point on the manifold, where an inherent difficulty in this approach is the choice of the location of the tangent space, e.g., \cite{ThomasLie2}. %For example, in the case of positive define matrices, the tangent space is the linear space of symmetrical matrices, where classical linear methods are well-defined.

The third method is by using repeated binary geodesic averages. To be specific, a linear sampled based approximation operator of the form $\mathcal{A}(f)(t) = \sum_{j=1}^n a_j(t)f(t_j)$ with $\sum_{j=1}^n a_j(t) = 1$,
can be rewritten in terms of repeated weighted binary averages in several ways \cite{WallnerNiraProx}. Then, equipped with one of these representations of $A(f)$ and an intrinsic average (average that is closed on the matrix class), we can replace each average between numbers with that intrinsic average of matrices. An example of such a representation is the de Casteljau algorithm for the evaluation of the approximating Bernstein polynomials, e.g., \cite{NirUri}.

%========================= Subdivision schemes based on matrix decomposition =============
\section{Approximation operators based on matrix decomposition} \label{sec:app_Schemes}

Let $\gmat$, $\gmat_1$, and $\gmat_2$ be matrix classes such that for any matrix $A \in \gmat$ there exists a unique decomposition
\begin{equation} \label{eqn:matrixDecomposition} 
A = A_1 A_2 ,\quad A_1 \in \gmat_1 , \quad A_2 \in \gmat_2  .  
\end{equation}
$\gmat$ is the \textit{product space} of $\gmat_1$ and $\gmat_2$. There is a handful of such matrix decomposition available, for example consider the QR-decomposition \cite[Chapter 5]{GolubMatrixBook}, where $\gmat=\gl(n)$, $\gmat_1 = \so $, and $\gmat_2$ is the class of $n$-th order upper triangular matrices with positive diagonal elements (in the context of Lie groups this decomposition is known as Iwasawa decomposition). 

Another example is the LU decomposition \cite[Chapter 3]{GolubMatrixBook} over $\gmat= \gl(n)$, where $\gmat_1 = \{ L \in \ltri \mid \diag(L)=(1,\ldots,1) \} $ and $\gmat_2 = \utri$. Two variants of this decomposition will be discuss in Subsection~\ref{sec:Preserving_signs} and in Section~\ref{sec:triangular_matrices}.

\begin{remark}
The matrix relation \eqref{eqn:matrixDecomposition} and product space $M$ can be regarded as a special case of Cartesian product and Cartesian product space. Throughout this paper, we focus on the matrix properties and operators on MVFs. This focus is different from the perspective of classical, topological studies of Cartesian products. Thus, we did not use the classical terminology and notations.
\end{remark}

\begin{remark}
A general decomposition is of the form
\[ A = \Pi_{j=1}^m  A_j, \quad A_j \in \gmat_j . \]
Here, for brevity we only use $m=2$. However, all the general results we obtain can be easily extended for $m>2$. In fact, we do discuss a few important and interesting special cases of $m=3$ in Section~\ref{sec:TwoExamples}.
\end{remark}

In the context of approximation, we require our matrix class to be associated with a metric. Some global metrics such as the Euclidean metric induced by the Frobenius norm, 
\begin{equation} \label{eqn:EuclideanMetric}
d(A,B) = \| A-B \|_F , \quad A,B \in \mathbb{R}^{n \times m} ,
\end{equation}
are always available, where $\norm{X}_F = \sqrt{\tr(XX^\ast)}$. However, they usually do not reflect the special geometry of each matrix space in use. For example, in matrix spaces that have the structure of Riemannian manifolds, such as $\so$ or $\spd$, one can use a Riemannian metric.

As assumed, $\gmat_1$ and $\gmat_2$ are special structured matrix classes or matrix manifolds, and denote by $d_1$ and $d_2$ the metrics associated with $\gmat_1$ and $\gmat_2$, respectively. Then, we define a product metric associated with the product space $\gmat$, based on $d_1$ and $d_2$, and their induced geodesics. These geodesics are defined on metric spaces via the concept of \textit{metric property}.

The metric property in metric spaces characterizes a geodesic, denote by $\gamma(t)$. This path is termed geodesic although it is not defined as the solution of Euler-Lagrange equations (there is not enough structure to do so) since it locally minimizes the distance between two points in the space $A,B$ with respect to a given metric $d$. The metric property is
\begin{equation} \label{eqn:metric_property}
 d(\gamma(t),B) = (1-t)d(A,B),\quad  0\le t \le 1 .  
\end{equation}
Clearly, we have $\gamma(0)=A$ and $\gamma(1)=B$. Furthermore, in neighborhoods (or spaces) where the geodesic is unique we get by symmetry and the triangle inequality that the above is equivalent to $ d(A,\gamma(t)) = td(A,B)$. In the following discussion we assume uniqueness of this geodesic (or a canonical way to choose one).

We suggest a method to construct a homogeneous product metric for $\gmat$ based on an auxiliary function and the metrics $d_1$, $d_2$. Let $A_{j}$ and $B_{j}$, $ j=1,2$ be two pairs of matrices, connected by two geodesics with respect to $d_j$ that satisfy \eqref{eqn:metric_property},
\[ \gamma_j \colon  [0,1] \to \gmat_j , \quad \gamma_j(0) = A_{j}, \quad \gamma_j(1) = B_{j} , \quad j=1,2 . \]
Then, we define a product geodesic by
\[ \gamma \colon [0,1] \to \gmat, \quad \gamma = \gamma_1 \gamma_2  , \quad \gamma(0) = A =A_1A_2 , \quad \gamma(1) = B=B_{1}B_2 . \] 
The following results shows how to construct a metric for $\gmat$ that satisfies the metric property \eqref{eqn:metric_property}.

\begin{proposition} \label{pro:HomoProductMetric}
Let $(\gmat_1,d_1)$,$(\gmat_2,d_2)$ be two metric spaces of matrices, and define a product matrix space by \eqref{eqn:matrixDecomposition}. Assume $ \psi \colon \mathbb{R}_+ \times \mathbb{R}_+ \to \mathbb{R}_+$ is a continuous function satisfying 
\begin{enumerate}
\item $\psi \ge 0$, and $\psi(x_1,x_2)=0$ if and only if $x_1=x_2=0$. 
\item $\psi$ is monotone in every variable.
\item $\psi$ is Jensen-convex, that is 
 \[ \psi \bigl(  (\frac{1}{2} x_1,\frac{1}{2} x_2)+ (\frac{1}{2} y_1, \frac{1}{2}y_2)  \bigr) \le \frac{1}{2} \bigl( \psi(x_1,x_2)+\psi(y_1,y_2)\bigr) . \]
\item $\psi$ is homogeneous i.e., $\psi(\alpha x_1,\alpha x_2)=\alpha \psi(x_1,x_2)$. 
\end{enumerate}
 Then,
 \begin{equation} \label{eqn:d_psi_metric}
  d_\psi(A,B)=\psi(d_1(A_1,B_1),d_2(A_2,B_2)) 
 \end{equation}
 is a product metric, and the product geodesic $\gamma = \gamma_1 \gamma_2$ in $\gmat$ has the metric property related to $d_\psi$.
\end{proposition}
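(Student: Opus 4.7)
My plan is to verify the three metric axioms for $d_\psi$ one by one, and then obtain the metric property of the product geodesic by a direct computation that uses only the homogeneity of $\psi$.

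First I would handle positivity and the identity of indiscernibles. Nonnegativity is immediate from property (1) of $\psi$ together with the nonnegativity of $d_1, d_2$. For the identity, $d_\psi(A,B) = 0$ forces $\psi(d_1(A_1,B_1),d_2(A_2,B_2))=0$, which by property (1) forces $d_1(A_1,B_1) = d_2(A_2,B_2) = 0$; since $d_1$ and $d_2$ are metrics this gives $A_1=B_1$ and $A_2=B_2$, and then the uniqueness of the decomposition in \eqref{eqn:matrixDecomposition} yields $A=B$. Symmetry of $d_\psi$ is inherited directly from the symmetry of $d_1$ and $d_2$.

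The main step is the triangle inequality. Given matrices $A,B,C \in \mathcal{M}$ with decompositions $A=A_1A_2$, $B=B_1B_2$, $C=C_1C_2$, I would first use monotonicity (property (2)) together with the triangle inequalities for $d_1$ and $d_2$ to bound
\[
d_\psi(A,C) \le \psi\bigl(d_1(A_1,B_1)+d_1(B_1,C_1),\; d_2(A_2,B_2)+d_2(B_2,C_2)\bigr).
\]
The remaining, and key, task is to establish subadditivity of $\psi$, i.e.\ $\psi(x_1+y_1,x_2+y_2)\le \psi(x_1,x_2)+\psi(y_1,y_2)$. This is the obstacle I anticipate, but it follows from combining properties (3) and (4): Jensen-convexity gives
\[
\psi\!\left(\tfrac{x_1+y_1}{2},\tfrac{x_2+y_2}{2}\right) \le \tfrac{1}{2}\bigl(\psi(x_1,x_2)+\psi(y_1,y_2)\bigr),
\]
and then homogeneity with $\alpha=2$ turns the left-hand side into $\tfrac{1}{2}\psi(x_1+y_1,x_2+y_2)$, yielding the desired subadditivity. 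Applying this to the bound above finishes the triangle inequality.

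Finally, for the metric property of $\gamma=\gamma_1\gamma_2$ with respect to $d_\psi$, I would plug the definition into \eqref{eqn:d_psi_metric} and use that each $\gamma_j$ satisfies \eqref{eqn:metric_property} with respect to $d_j$:
\[
d_\psi(\gamma(t),B) = \psi\bigl(d_1(\gamma_1(t),B_1),\,d_2(\gamma_2(t),B_2)\bigr) = \psi\bigl((1-t)d_1(A_1,B_1),\,(1-t)d_2(A_2,B_2)\bigr).
\]
One application of homogeneity (property (4)) with $\alpha=1-t$ pulls the factor outside, giving $(1-t)\,d_\psi(A,B)$ as required. Continuity of $\psi$ and of the $\gamma_j$'s ensure $\gamma$ is a well-defined continuous path, and the boundary values $\gamma(0)=A$, $\gamma(1)=B$ are inherited from $\gamma_1,\gamma_2$.
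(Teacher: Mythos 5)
Your proposal is correct and follows essentially the same route as the paper's proof: symmetry and positivity from property (1), the triangle inequality via monotonicity plus the subadditivity $\psi(x_1+y_1,x_2+y_2)\le\psi(x_1,x_2)+\psi(y_1,y_2)$ derived from Jensen-convexity and homogeneity, and the metric property by one application of homogeneity (you use the equivalent form $d_\psi(\gamma(t),B)=(1-t)d_\psi(A,B)$ where the paper writes $d_\psi(A,\gamma(t))=t\,d_\psi(A,B)$, but the paper itself notes these are equivalent). Your write-up is in fact slightly more complete, since you spell out the identity of indiscernibles via uniqueness of the decomposition \eqref{eqn:matrixDecomposition}, which the paper leaves implicit.
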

\begin{proof}
We first need to show that $d_\psi$ is indeed a metric. The positivity is ensured by the first property of $\psi$, combined with the fact that $d_1$ and $d_2$ are metrics. The letter also implies the symmetry. For the triangle inequality, note that the Jensen-convexity and the homogeneity suggest
\begin{equation} \label{eqn:d_psi_proof}
\psi\left( (x_1,x_2)+(y_1,y_2)\right) \le \psi\left(x_1,x_2\right)+\psi\left(y_1,y_2\right) .
\end{equation}
Thus, for any $A,B,C \in \gmat$ we have
\begin{eqnarray*}
d_\psi(A,C) &=& \psi(d_1(A_1,C_1),d_2(A_2,C_2)) \\
& \le &   \psi(d_1(A_1,B_1)+d_1(B_1,C_1),d_2(A_2,B_2)+d_2(B_2,C_2)) \\
& \le &   \psi(d_1(A_1,B_1),d_2(A_2,B_2)+\psi(d_1(B_1,C_1),d_2(B_2,C_2))  = d_\psi(A,B) + d_\psi(B,C) ,
\end{eqnarray*}
where for the first inequality we use the monotonicity of $\psi$ and for the second we use \eqref{eqn:d_psi_proof} together with the positivity of $\psi$. The notations are as in \eqref{eqn:matrixDecomposition} with $C=C_1C_2$. 

The metric property of the product geodesic is derived by the homogeneity of $\psi$. Indeed, with the notation of the above discussion, for the product geodesic $\gamma$,
\[
d_\psi(A,\gamma(t))=\psi(d_1(A_1,\gamma_1(t)),d_2(A_2,\gamma_2(t))) =\psi(td_1(A_1,B_1),t d_2(A_2,B_2) )=td_\psi(A,B) . \]
\end{proof}

Note that by the Euler equation for homogeneous functions of order $n$, $\frac{\partial \psi}{\partial x_1} x_1 + \frac{\partial \psi}{\partial x_2} x_2 = n \psi  $ and so
if $\psi$ is twice differentiable then the kernel of the Hessian of $\psi$ contains the vector $(x_1,x_2)$ (our case is $n=1$). Therefore, we can only have a weak inequality in the  Jensen-convex condition of Proposition~\ref{pro:HomoProductMetric}. 

A classical example for product metric is the $p$-product metric 
\begin{equation} \label{eqn:p_product_metric}
d_p(A,B) = (d_1(A_1,B_1)^p + d_2(A_2,B_2)^p)^{1/p} ,\quad  A= A_1A_2, \quad B = B_1B_2 ,
\end{equation}
with  $1\le p \le \infty$. These examples are a special case of Proposition \ref{pro:HomoProductMetric}, with $\psi(x,y) = \bigl( x^p+y^p \bigr)^{\frac{1}{p}} $, where the conditions on $\psi$ follow from the Minkowski inequality.

The product metric will have an important role in our analysis of product operators, as we will see next in Section~\ref{sec:noteOnAnlaysis}.

In many cases, $\gmat_1$ and $\gmat_2$ are well-structured matrix classes (e.g, Lie groups) having several known approximation operators available. To define the product approximation operator denote BY $\mathcal{S}_1$ and $\mathcal{S}_2$ two approximation operators on $\gmat_1$ and $\gmat_2$, respectively. Furthermore, denote by $\Gamma_j = \mathcal{S}_j(\mathbf{A}_j)$, $j=1,2$, their generated MVFs. Now, a product approximation operator $\mathcal{S}$ on $\gmat$ is defined as
\begin{equation} \label{eqn:productSchemes}
 \mathcal{S}(\mathbf{A})  = \mathcal{S}_1(\mathbf{A}_1) \mathcal{S}_2(\mathbf{A}_2) ,
\end{equation}
where the sequence $ \{ A^{(i)} \}_{i \in \mathcal{J} }$ is defined as $  A^{(i)} = A_1^{(i)}A_2^{(i)} $ for $ \mathbf{A}_j = \{ A_j^{(i)} \}_{i \in \mathcal{J} }$, $j=1,2$. It is worth mentioning that we do not assume that the operators are different (in cases where $\gmat_1=\gmat_2$).

%========================= Two examples for product schemes ========================

\section{Three examples of constructing product approximation operator} \label{sec:TwoExamples}

The vast variety of matrix decompositions enables to design many types of product approximation operators for many scenarios. We demonstrate how to construct such operators according to given invariant or matrix properties to be preserved.

\subsection{Preserving positive determinant}

We start with a simple example to illustrate the general setting. Consider we have data over $\gmat = \gl(n)$, consists of matrices of positive determinant. Since $\gmat$ is a Lie group, one can apply directly an approximation operator such as geodesic subdivision schemes \cite{dyn2017manifold}. We describe an alternative which guarantee the preservation of positive determinants and also facilitate bounding the specific behavior of the determinant in the generated MVF.

One generalization of the polar representation of a complex number $z = re^{i\theta}$ is the polar decomposition of a matrix $A$,
\begin{equation} \label{eqn:PolarDecomposition}
A=P_A Q_A , \quad P_A \in \spd , \quad Q_A \in \so . 
\end{equation}
For invertible $A$ we have $P_A = (AA^\ast )^{\frac{1}{2}}$, and $Q_A = P_A^{-1}A$ (orthogonality of $Q_A$ is guaranteed as well as $\det(Q_A)=1$). This unique decomposition implies that a product operator is well-defined with one operator on $\spd$ and a second operator on $\so$. Both classes are well-studied manifolds, having many approximation operators available, see e.g., \cite{NirUri} and \cite{shingel2009interpolation} on $\spd$ and on $\so$, respectively.

Define the approximation separately on $\spd$ and on $\so$ ensures positive determinants in the generated MVF. Furthermore, bounds on the determinants are equivalent to bound the determinants of the generated MVF over $\spd$. However, over $\spd$ this kind of bounds are easier to calculate and in some cases even available directly, e.g., in \cite{UriNir}.

\subsection{Controlling a geometrical artifact in approximation of ellipsoids} \label{subsec:artefact}

The set of symmetric positive definite (SPD) matrices has several interesting geometrical interpretations. One is derived by associating each SPD matrix to a centered ellipsoid (the orthogonal eigenvectors as its axes and the corresponding eigenvalues as their lengths). This map is injective which means that The set of SPD matrices coincides with the set of centered ellipsoids. Other interpretation comes from the structure of a Riemannian manifold, which includes all of its ``natural" inherent geometry, such as the geodesic distance and geodesics. In particular, the geodesics on a Riemannian manifold induce an intrinsic average, where the mid-point of the geodesic connecting two matrices is defined as their average, see e.g. \cite{global_nira}. The geodesics in $\spd$, with respect to the Riemannian metric, have a closed form \cite{ALMean}. Therefore, they are used as an average in the construction of approximation operators, e.g. \cite{UriNir}. These approximation operators based upon geodesics have many important (algebraic, spectral, etc.) properties. However, as will see next, using this natural geometry of $\spd$ might lead to unnatural, geometric artifact in the first interpretation of $\spd$. 

\begin{figure}[t]   %[H]
\begin{center}
\subfigure[A transition of the matrices by rotation]{
\includegraphics[width=.7\textwidth]{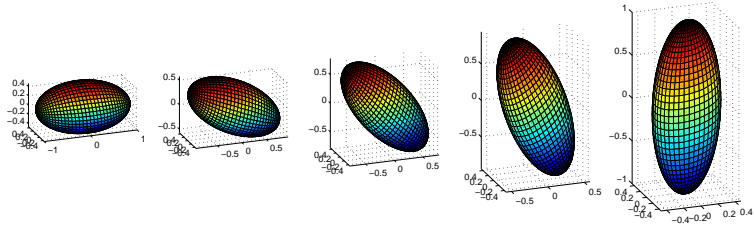}
\label{fig:tran1}
}
\subfigure[A transition on the geodesic connecting the most left anf right matrices, with respect to the Riemannian metric in $\spdn{3}$]{
\includegraphics[width=.7\textwidth]{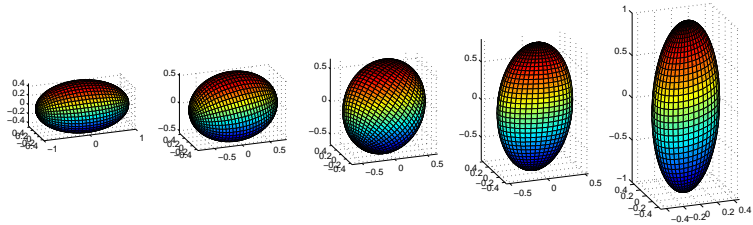}
\label{fig:tran2}
}
\caption{Two possible transitions between a positive definite matrix and its $90$ degrees rotation, appear as the first and last ellipsoids of every sequence.}
\label{fig:TransitionMatrices}
\end{center}
\end{figure}

Figure~\ref{fig:TransitionMatrices} demonstrates the geometric artifact of averaging positive definite matrices using their Riemannian geodesic, where two possible transition matrices between a matrix and its $90$ degrees rotation are presented. The matrices are shown by their interpretation as ellipsoids. In the first one, depicted in Figure~\ref{fig:tran1}, only rotations are ``allowed" and we naturally connect the two matrices (ellipsoids) by rotating them. In Figure~\ref{fig:tran2} we use the matrices on a geodesic of $\spd$ that connects the matrix and its $90$ degrees rotation, with respect to the Riemannian metric. By doing so, we first ``ball" the ellipsoid, and then stretch it to reach its rotated version. 

To overcome such a phenomenon, we have to impose the use of rotations, also known as ``rigid-as-possible", see e.g., \cite{sorkine2007rigid}. This approach is related to the spectral decomposition. The spectral decomposition of a matrix $A \in \spd$ can be written as
\begin{equation} \label{eqn:SpectralDecomposition}
A = Q^\ast D Q , \quad Q \in \so, \quad D \in \dMat , 
\end{equation}
where the diagonal elements of $D$ are in non-decreasing order. 

Golumb and Van Loan \cite[Chapter 12.4.1]{GolubMatrixBook} discuss the following problem of subspace rotation
\[  \min_{Q \in \so } \| A- BQ \|_F , \quad A,B \in \mathbb{R}^{m \times n} , \]
using the Frobenius norm \eqref{eqn:EuclideanMetric}. This problem is known as the ``orthogonal Procrustes problem" and is solved via the SVD decomposition of $B^\ast A$. However, for data consisting of matrices in $\spd$, we are interested in a variant of this problem which preserves symmetry. Schonemann called it the ``two-sided orthogonal Procrustes problem" \cite{schonemann1968two}. This problem is to find the minimizer $Q \in \so$ of
\begin{equation} \label{eqn:ProcrustesProblem}
 d_S(A,B) = \min_{Q \in \so } \| A- Q^\ast BQ \|_F , \quad A,B \in \spd . 
\end{equation}
The solution of \eqref{eqn:ProcrustesProblem} is $Q = P_1^\ast P_2$ such that $P_1,P_2 \in SO(n)$ and $P_1$ and $P_2$ diagonalize $A$ and $B$, respectively, with non-decreasing eigenvalues on the diagonal. Note that $d_S(A,B)$ is a pseudo-metric which induces the partition of $\spd$ into equivalence classes where $X \sim Y$ if and only if $d_S(X,Y)=0$, that is $X$ and $Y$ are orthogonal similar.

We define the product operator based on the spectral decomposition \eqref{eqn:SpectralDecomposition} as
\begin{equation} \label{eqn:SpectralSchemes}
\mathcal{S} = (\mathcal{S}_1)^\ast \mathcal{S}_2 \mathcal{S}_1 ,
\end{equation}
where $\mathcal{S}_1$ is an operator on $\so$, $\mathcal{S}_2$ is an operator on $\dMat$, and $(\mathcal{S}_1)^\ast$ is the transpose of the output of $\mathcal{S}_1$. From the ellipsoid point of view, the operator $\mathcal{S}_1 $ rotates the axes while the operator $\mathcal{S}_2$ scales them. Therefore, one can verifies that the product operator \eqref{eqn:SpectralSchemes} can be used to solve the geometric artifact problem described above by interpolating the rotations. To see it, consider a consistent $\mathcal{S}_2$, that is an operator that returns a constant function for constant input samples. Then, if all the data matrices are similar i.e., share the same eigenvalues, then so are all the matrices of the function generated by $\mathcal{S}$.  One meaning of the last observation is that if we use a local operator (say a subdivision scheme) and apply the product operator to a neighborhood of data which consists of similar matrices, the product operator will rotate the ellipsoids in this neighborhood while keeping the rigid structure of them. Moreover, if also $\mathcal{S}_1)$ is consistent, we have that any operator of the form \eqref{eqn:SpectralSchemes} is closed on data from the same equivalence class of $d_S$.

%---------------- Wallner's remark 2------------------------------
\begin{remark}
When the matrices of the data approach scalar matrix, that is a matrix of the form $\alpha I $, the output of the operator for $\so$ may not be continuous. The reason is that the orthogonal part of the spectral decomposition is not unique for scalar matrices. However, this is not reflected in the output of the product operator, since the $\so$ operator on the transpose is canceling out the discontinuity. In the sense of the upcoming analysis, the product metric is not majorized by any dense metric of $\spd$ (more on Section~\ref{sec:noteOnAnlaysis}). 
%Hence, we cannot use the machinery of Section \ref{sec:app_Schemes}, where the analysis is done under the assumption of a unique decomposition.
\end{remark} 

%---------------- Gromov part ------------------------------
\begin{remark}
The geometry motivation presented above, illustrates two different types of geometry, defined by the two distances. The first is the distance derived from the geodesics in the Riemannian manifold of $\spd$, denoted by $d_R$, and the second is the distance by the pseudo-metric $d_S$ of \eqref{eqn:ProcrustesProblem}. Nevertheless, an intermediate geometry, obtained from a hybrid metric is possible. That is,
\[ d_H(A,B) = d_R(A,B) + \beta d_S(A,B) , \quad A,B \in \spd . \]
For $\beta \in (0,\infty)$, $d_H$ is a metric. This parameter describes the ``freedom" to use rotations in the geodesic that connect the two matrices with respect to $d_H$.

Such operators, based on the geometry of the hybrid metric, can be calculated numerically and allow to control the above geometrical phenomenon in a continuous fashion between the two approaches of geodesic based operators ($d_R$ geometry) versus the product operators based on the spectral decomposition (the ``rigid'' $d_S$ geometry).
\end{remark} 

\subsection{Preserving signs of principle minors} \label{sec:Preserving_signs}

Assume the data $\mathbf{A}$ consists of a sequence of matrices with nonzero principle minors and we require to generate a continuous MVF that preserves this property in each of its parametric points. In addition, assume that the vector of signs of the principle minors is the same for all the data matrices (otherwise, due to Bolzano's Theorem, any continuous MVF that fits to the data will break the required property of nonzero principle minors. How can we address such a problem?

Nonzero principle minors of a matrix guarantee that we can apply Gaussian elimination process without pivoting. Thus, we exploit this structure by addressing the above problem with a product operator based on the matrix decomposition that related to the Gaussian elimination process -- this is the LDU decomposition, see e.g, \cite[Chapter 4]{GolubMatrixBook}. 

Let $A$ be an invertible matrix with nonzero principle minors. Then, the LDU decomposition (a variant of the LU factorization) defines three matrices $A=L_AD_AU_A$ where $L_A$ is a unit lower triangular matrix (with all diagonal elements equal to $1$), $U_A$ is a unit upper triangular matrix, and $D_A$ is a diagonal matrix with elements $(D_A)_{ii}= \frac{p_i(A)}{p_{i-1}(A)}$ $i=1,\ldots,n$, where $p_i(A)$ is the principle minor of $A$ of order $i$ and $p_0(A)=1$ \cite[Chapter 3]{GolubMatrixBook}.  

We use the product operator $\mathcal{S} = \mathcal{S}_1 \mathcal{S}_2 \mathcal{S}_3$, where $\mathcal{S}_1$ and $\mathcal{S}_3$ are operators on unit lower and upper triangular matrices, respectively. The class of unit triangular matrices (upper or lower) is called the Heisenberg group, and it is a Lie groups, where approximation operators are well-studied, see for example \cite{WallnerNiraProx}. Furthermore, we propose in Section~\ref{sec:triangular_matrices} an alternative for designing such operators in the spirit of product operators.

We choose the operator $\mathcal{S}_2$ on $\dMat$ as an element-wise operator (this is trivial since any operator based on matrix multiplication is such), that also preserves sign. Since in this problem we know that all scalar data consists of the same sign, this is equivalent to having a positivity preserving operator for scalars. Preserving positivity can be done for example by applying a logarithm to the (scalar) data, applying any approximation operator on numbers and mapping back using the exponential, see e.g., \cite{UriNir}. By retaining the signs of the diagonal elements and since $p_0(A)=1$ for all $A$, the signs of the principle minors of the MVF $\Gamma(t) = \mathcal{S}(\mathbf{A})(t)$ are the same as those of the data matrices, as required.

Note that the class of $\spd$ is a special case of data, corresponding to positive principle minors. 

%========================= Analysis tools =======================
\section{Notes on analysis of product approximation operators} \label{sec:noteOnAnlaysis}

\subsection{Regularity of product MVFs}

We start by providing basic definitions of the regularity for MVFs, given an intrinsic metric $d$.
\begin{definition}[Continuity] \label{def:Continuity}
Let $\Gamma \colon \mathtt{I} \to \gmat$ be a matrix-valued function satisfying,
\[  \lim_{ \triangle t \rightarrow 0} d(\Gamma(t),\Gamma(t+\triangle t)) =0, \quad t \in \mathtt{I}  . \]
Then $\Gamma$ is called a \textbf{\em continuous MVF} \normalfont{($\mathbf{C}^0$)} on $\mathtt{I}$. 
\end{definition}

\begin{definition}[$\alpha$-H\"older]  \label{def:Smoothness}
Let $\Gamma \colon \mathtt{I}  \to \gmat$ be a continuous MVF. $\Gamma$ is termed $\alpha$-H\"older continuous on $\mathtt{I}$ ($\mathbf{C}^{0,\alpha}$) if there exist constants $C>0$ and $0<\alpha \le 1$ such that
\[ d(\Gamma(t),\Gamma(t+\triangle t)) \le C |\triangle t|^\alpha, \quad  t,t+\triangle t \in \mathtt{I}  . \]
\end{definition}

\begin{remark}
It is possible to define a higher order of smoothness by the limit
\[ \frac{d}{dt} \Gamma (t) =  \lim_{\triangle t\rightarrow0}\frac{\Gamma(t+\triangle t)-\Gamma(t)}{\triangle t}, \quad t \in \mathtt{I} . \]
In the case where this limit exists and $\frac{d}{dt}\Gamma(t)$ is continuous for every $t \in \mathtt{I}$, we can define the MVF to be $\mathbf{C}^1$, and recursively define smoothness higher than $\mathbf{C}^1$. Nevertheless, two important issues should arise here. First, what is the matrix class to which the MVF $\frac{d}{dt}\Gamma(t)$ belongs to? Second, if $\gmat$ has a special structure such as the structure of a manifold embedded in $\mathbb{R}^{n \times n}$, then the continuity and the H\"older condition are measured intrinsically, while the derivative is measured extrinsically. So on this paper we do not consider higher smoothness.
\end{remark}

Let $\Gamma$, $\Gamma_1$, and $\Gamma_2$ be three MVFs, which in the view of \eqref{eqn:matrixDecomposition} satisfy
\begin{equation} \label{eqn:matrixCurveDecomposition}
\Gamma(t) = \Gamma_1(t) \Gamma_2(t) , \quad  \Gamma(t) \in \gmat , \quad \Gamma_1(t) \in \gmat_1 , \quad \Gamma_2(t) \in \gmat_2 , \quad t \in \mathbb{R} . 
\end{equation}
We study the regularity of $\Gamma(t)$ with respect to the given regularity of $\Gamma_1(t)$ and $\Gamma_2(t)$, where these regularities depend on the metrics in use. Therefore, we examine the relation between the metrics. 

Let $d$ and $\bar{d}$ be two metrics, defined on a common space $V$. We say that $\bar{d}$ is \textit{majorized} by $d$ if there exists a positive constant $C$ such that
\begin{equation} \label{eqn:majorized}
\bar{d}(u,v) \le C d(u,v) , \quad u,v\in V. 
\end{equation}
Two metrics are equivalent if they majorize each other. Using the above majorization we have,

\begin{lemma} \label{lemma:equiv_metrics}
Let $\Gamma:I \mapsto \gmat$ be a continuous ($\alpha$-H\"older) MVF, under the metric $d$, where $I \subset \mathbb{R}$ is a finite interval. If the product metric $d_\psi$ of Proposition~\ref{pro:HomoProductMetric} is majorized by $d$, then $\Gamma(t)$ is continuous ($\alpha$-H\"older) under $d_\psi$ as well.
\end{lemma}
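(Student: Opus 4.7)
The plan is to unwind the definitions of continuity and $\alpha$-Hölder continuity under $d_\psi$ and invoke the majorization inequality \eqref{eqn:majorized} to transfer the regularity bound from $d$ to $d_\psi$. Since $d_\psi$ is a legitimate metric on $\gmat$ by Proposition~\ref{pro:HomoProductMetric}, the definitions in Definition~\ref{def:Continuity} and Definition~\ref{def:Smoothness} apply directly to it, so the work reduces to estimating $d_\psi(\Gamma(t),\Gamma(t+\Delta t))$ in terms of a quantity that is already known to be small.

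First, I would fix $t, t+\Delta t \in I$ and apply the hypothesis that $d_\psi$ is majorized by $d$, with some constant $C>0$, to the pair $u = \Gamma(t)$, $v = \Gamma(t+\Delta t) \in \gmat$, yielding
\[
d_\psi\bigl(\Gamma(t),\Gamma(t+\Delta t)\bigr) \;\le\; C\, d\bigl(\Gamma(t),\Gamma(t+\Delta t)\bigr).
\]
Then, in the continuity case, letting $\Delta t \to 0$ and using continuity of $\Gamma$ under $d$ forces the right-hand side to zero, giving continuity of $\Gamma$ under $d_\psi$. In the $\alpha$-Hölder case, the hypothesis supplies a constant $K>0$ with $d(\Gamma(t),\Gamma(t+\Delta t)) \le K|\Delta t|^\alpha$, which combined with the displayed majorization yields $d_\psi(\Gamma(t),\Gamma(t+\Delta t)) \le CK|\Delta t|^\alpha$, i.e. the $\alpha$-Hölder condition with constant $CK$.

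There is essentially no obstacle here beyond bookkeeping; the only thing worth noting is that the majorization \eqref{eqn:majorized} is assumed to hold on all of $\gmat$ with a single constant $C$, so the Hölder constant $CK$ obtained above is uniform on $I$, matching the global constant required by Definition~\ref{def:Smoothness}. The fact that the $\alpha$ exponent is preserved (rather than being degraded) follows automatically because the majorization is linear in the two distances.
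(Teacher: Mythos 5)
Your proposal is correct and is essentially the argument the paper intends: the paper's own proof is a one-line remark that the result follows from the definitions, the majorization inequality \eqref{eqn:majorized}, and arithmetic of limits, and your write-up simply fills in those routine details. Your observation that the single global constant $C$ in \eqref{eqn:majorized} makes the H\"older constant uniform is also correct, and in fact shows the compactness of $I$ invoked in the paper's sketch is not even needed under the stated (global) notion of majorization.
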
 
The proof is a straightforward use of the rules for the arithmetic of limits, Definition~\ref{def:Continuity}, Definition~\ref{def:Smoothness}, and the compactness of $I$. 

While matrix norms in finite dimensional spaces are equivalent, the metrics in general are not. To illustrate this claim we provide a counter example.
\begin{example}\label{Example:counterexample}
Let $\gmat = \gl(2)$. Consider the QR-decomposition and the MVF
\[  \gamma  \colon [0,1] \to \gl(2) , \]
defined by 
\[  \gamma(t) = Q(t)R(t) =
\begin{pmatrix} 
  \cos(t)     & \sin(t) \\ 
  -\sin(t) &  \cos(t)  
\end{pmatrix} 
\begin{pmatrix} 
  \exp(t)     & 0 \\ 
  0 &  \exp(t)  
\end{pmatrix} . \] 
For $\gmat$ we use the following metric, also known as the British Railway metric,
\[  d_{BR}(A,B) = \begin{cases} 0 &  A=B, \\
\|A\| + \|B\| &\text{otherwise.}
\end{cases} \]
On the one hand we have, 
\[  \lim_{t \rightarrow 0 }  Q(t) = \lim_{t \rightarrow 0} R(t) = I ,  \]
on the other hand,  there is no $\lim_{t \rightarrow 0 } \gamma(t)$ under the metric $d_{BR}$. Namely, $d_{BR}$ cannot be majorized by the standard Euclidean metric \eqref{eqn:EuclideanMetric} or any other equivalent metric of it. This example illustrates that the metric $d_{BR}$ is not equivalent to the metric \eqref{eqn:EuclideanMetric}.
\end{example}

We aim to inherit the regularity from the decomposition components to the product function. However, Example \ref{Example:counterexample} implies that extra conditions must be posed. In the following definition we introduce such a condition.
\begin{definition}
A metric space $(V,d)$ is called \textbf{metric dense} if for any $x \in V$ and any $\varepsilon>0$ there exist $y \in V$, $y \neq x$ such that $d(x,y)<\varepsilon$.
\end{definition}

The significance of the latter definition is illustrated in the next lemma.
\begin{lemma} \label{lemma:equivalent}
Let $(V,d_1)$ and $(V,d_2)$ be two metric dense matrix spaces. Then, in any compact neighborhood, the metrics $d_1$ and $d_2$ are equivalent.
\end{lemma}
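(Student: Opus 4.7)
The plan is to prove the two majorizations simultaneously by a compactness plus contradiction argument, using the metric-density hypothesis to exclude the type of pathology displayed by the British Railway metric in Example~\ref{Example:counterexample}.

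First I would fix a compact neighborhood $K \subset V$ and reduce, by the symmetric role of $d_1$ and $d_2$, to producing a single constant $C>0$ such that $d_2(x,y) \le C\, d_1(x,y)$ for all $x,y \in K$. Suppose no such $C$ exists. Then there are sequences $\{x_n\}, \{y_n\} \subset K$ with
\[
d_2(x_n,y_n) \ge n\, d_1(x_n,y_n), \qquad n=1,2,\ldots .
\]
I would split the analysis according to whether $d_1(x_n,y_n)$ stays bounded away from $0$ or not. If $d_1(x_n,y_n) \ge \delta > 0$ along a subsequence, then $d_2(x_n,y_n) \to \infty$, contradicting the boundedness of $K$ in the metric $d_2$ (which follows from compactness, once we fix one of the two metrics as the ambient one).

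The substantive case is therefore $d_1(x_n,y_n) \to 0$. Here I would pass to a subsequence, using compactness of $K$ in $d_1$, to get $x_n \to x^\ast$ in $d_1$, and hence also $y_n \to x^\ast$ in $d_1$. The goal is to derive a contradiction from the inequality $d_2(x_n,y_n) \ge n\, d_1(x_n,y_n)$. The key use of the metric-density hypothesis is at this point: because $(V,d_1)$ and $(V,d_2)$ are both metric dense, for every $\varepsilon>0$ the $d_1$-ball and the $d_2$-ball around $x^\ast$ of radius $\varepsilon$ each contain points other than $x^\ast$, so neither metric ``isolates'' the accumulation point $x^\ast$. Combining this with compactness — specifically, by covering $K$ with finitely many small balls in each metric and extracting matching centers — one bounds the ratio $d_2(u,v)/d_1(u,v)$ uniformly on a punctured neighborhood of $x^\ast$, which contradicts the unbounded growth forced by the chosen subsequence.

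The hard part will be making the last step rigorous: metric density by itself does not rule out examples like $d_2 = \sqrt{d_1}$, so the argument must invoke the fact that both $d_1$ and $d_2$ are metrics on the same matrix class and both are compatible, through density, with the ambient notion of convergence inherited from the surrounding Euclidean structure on $\mathbb{R}^{n\times n}$. This is precisely where the ``matrix space'' assumption in the lemma is used, and I would state it as: both metrics are metric dense and both are continuous with respect to a common ambient topology on $V$. Under that reading the contradiction closes cleanly; without it, as noted after Example~\ref{Example:counterexample}, extra conditions must be posed, which the authors frame via metric density.
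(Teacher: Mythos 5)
Your instinct in the closing paragraph is the crux, and it is fatal: the step you flag cannot be made rigorous, and your proposed repair does not rescue it, because the lemma as stated is false. Take $V$ to be any matrix class with no isolated points, $d_1$ the Frobenius metric \eqref{eqn:EuclideanMetric}, and $d_2 = \sqrt{d_1}$. Then $d_2$ is a metric (an increasing concave function of a metric, vanishing at $0$), both $(V,d_1)$ and $(V,d_2)$ are metric dense, and $d_2$ induces exactly the same topology as $d_1$ --- so it also satisfies the extra hypothesis you propose, continuity with respect to a common ambient topology. Yet on any compact neighborhood $K$, metric density produces pairs $x \neq y$ with $d_1(x,y)$ arbitrarily small, and for such pairs $d_2(x,y)/d_1(x,y) = d_1(x,y)^{-1/2} \to \infty$, so $d_2$ is not majorized by $d_1$ in the sense of \eqref{eqn:majorized}. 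Note that ``equivalent'' in this paper means mutual majorization by constants (bi-Lipschitz), not topological equivalence, and no assumption of a topological nature can ever force the former. So your contradiction argument stalls exactly where you say it does: in the case $d_1(x_n,y_n) \to 0$ there is simply nothing to contradict. (Your first case also has a soft spot: compactness of $K$ with respect to $d_1$ gives $d_1$-boundedness, not $d_2$-boundedness, so you must additionally assume $K$ is compact in both metrics.)

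For comparison, the paper's own proof is a two-line sketch suffering from the same disease, so you have not missed an idea the authors had. They argue via the ball inclusion $B_2(A,1) \subset B_1(A,\rho)$ plus ``compactness of the domain''. An inclusion of balls at the fixed radius $1$ only says that $d_2(A,X) \le 1$ implies $d_1(A,X) \le \rho$; this is a statement at a single scale and of the wrong form --- it does not linearize to $d_1 \le \rho\, d_2$, and it says nothing about the regime $d_1(x,y) \to 0$, which is precisely where majorization fails in the $\sqrt{d_1}$ example. A correct version of the lemma needs a genuinely stronger hypothesis than metric density --- for instance that both metrics are length (geodesic) metrics with a uniform small-scale comparison, or an explicit local bi-Lipschitz assumption --- at which point the conclusion is essentially being assumed. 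In short: your attempt is incomplete, but honestly so, and the incompleteness reflects a flaw in the statement itself rather than a missing trick in your argument.
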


\begin{proof}
Let $B_j(A,r) = \{ X \in V \mid d_j(A,X) \le r \} $, $j=0,1$ be the closed ball of radius $r$ around $A$, using the metric $d_j$. Since $V$ is of finite dimensions and the metrics are dense, these balls are well defined. Furthermore, we can find a positive constant $\rho $ such that $B_2(A,1) \subset  B_1(A,\rho)$. The opposite inclusion is analogous. Due to the compactness of the domain the claim follows. %See also \cite{}.
\end{proof}

The next conclusion is followed from the last discussion.
\begin{corollary}
Let $\Gamma_1$ and $\Gamma_2$ be continuous MVFs such that the metrics of each space is metric dense. Then, the product function $\Gamma(t)$ is a continuous MVF under any dense metric $d$.
\end{corollary}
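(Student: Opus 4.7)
The plan is to factor the proof through a concretely chosen product metric $d_\psi$ from Proposition~\ref{pro:HomoProductMetric}, for instance the $p=1$ instance $\psi(x,y)=x+y$, so that
\[ d_\psi(\Gamma(t),\Gamma(s)) = d_1(\Gamma_1(t),\Gamma_1(s)) + d_2(\Gamma_2(t),\Gamma_2(s)). \]
The strategy then has three steps: (i) establish continuity of $\Gamma$ under $d_\psi$, (ii) verify that $d_\psi$ is itself a dense metric on $\gmat$, and (iii) use Lemma~\ref{lemma:equivalent} to transfer continuity from $d_\psi$ to the arbitrary target dense metric $d$.

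Step (i) follows immediately from the defining formula~\eqref{eqn:d_psi_metric}. For any $t \in \mathtt{I}$ the $d_j$-continuity of $\Gamma_j$ forces $d_j(\Gamma_j(t),\Gamma_j(t+\Delta t)) \to 0$ as $\Delta t \to 0$ for $j=1,2$; combined with continuity of $\psi$ at the origin and the normalization $\psi(0,0)=0$ (condition 1 of Proposition~\ref{pro:HomoProductMetric}), this gives $d_\psi(\Gamma(t),\Gamma(t+\Delta t))\to 0$, which is exactly Definition~\ref{def:Continuity}.

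Step (ii) exploits uniqueness of the product decomposition. Given $A = A_1 A_2\in\gmat$ and $\varepsilon>0$, I would pick $B_j\in \gmat_j\setminus\{A_j\}$ with $d_j(A_j,B_j)$ arbitrarily small using density of $d_j$, and set $B = B_1 B_2$. Uniqueness in~\eqref{eqn:matrixDecomposition} guarantees $B\neq A$, while monotonicity and homogeneity of $\psi$ bound $d_\psi(A,B)$ by a small multiple of $\max_j d_j(A_j,B_j)$, which can be made smaller than $\varepsilon$. For step (iii), fix $t_0\in\mathtt{I}$ and select a compact neighborhood $K$ of $\Gamma(t_0)$ in $\gmat$. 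By step (ii) and the hypothesis on $d$, both $(\gmat,d_\psi)$ and $(\gmat,d)$ are metric dense, so Lemma~\ref{lemma:equivalent} produces constants $0<c\le C$ with $c\, d_\psi \le d \le C\, d_\psi$ on $K$; combined with step (i) this forces $d(\Gamma(t),\Gamma(t+\Delta t))\to 0$ for all $t$ near $t_0$, completing the argument.

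The main obstacle I anticipate is step (ii): I must ensure the perturbed product $B_1B_2$ genuinely differs from $A_1A_2$, and this rests squarely on the uniqueness of the decomposition~\eqref{eqn:matrixDecomposition} — without it, nothing prevents two distinct factor pairs from collapsing onto the same product. A secondary subtlety is that Lemma~\ref{lemma:equivalent} only delivers equivalence on compact neighborhoods, so step (iii) has to be carried out pointwise rather than globally; this is benign, since continuity is itself a local notion.
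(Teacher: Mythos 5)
Your proof is correct and takes essentially the same route the paper intends: the paper gives no explicit proof of this corollary (it is ``followed from the last discussion''), and that implicit argument is precisely your three steps --- continuity of $\Gamma$ under a product metric $d_\psi$, metric density of $d_\psi$, and transfer to the arbitrary dense metric $d$ via the local equivalence of Lemma~\ref{lemma:equivalent}. If anything, your step (ii), which uses uniqueness of the decomposition \eqref{eqn:matrixDecomposition} to verify that $(\gmat,d_\psi)$ is itself metric dense (a hypothesis Lemma~\ref{lemma:equivalent} requires but the paper never checks), makes your write-up more complete than the original.
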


\subsection{Properties of product operators directly inherited from their factory}

A fundamental question for iterative approximation operators (for example subdivision schemes) is the regularity of the generated approximation (see e.g., \cite{WallnerNiraProx}). Thus, as a conclusion from the previous subsection we have the following. 
\begin{corollary} \label{cor:general_regularity}
Suppose $\mathcal{S}_1$ and $\mathcal{S}_2$ generate continuous ($\alpha$-H\"older) MVFs, and assume the metric $d$ of $\gmat$ is majorized (see \eqref{eqn:majorized}) by some product metric $d_\psi$. Then, $\mathcal{S}$ generates continuous ($\alpha$-H\"older) MVFs. In such a case, we say that $\mathcal{S}$ is a $\mathbf{C}^{0}$ ($\mathbf{C}^{0,\alpha})$ product operator.
\end{corollary}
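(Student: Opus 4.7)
The plan is to route the argument through the product metric $d_\psi$ as an intermediate object: first show that the product MVF $\Gamma(t)=\mathcal{S}_1(\mathbf{A}_1)(t)\,\mathcal{S}_2(\mathbf{A}_2)(t)$ is continuous (respectively $\alpha$-H\"older) with respect to $d_\psi$, and then invoke the majorization $d\le C\,d_\psi$ to conclude the same regularity with respect to $d$. In effect this reverses the direction of Lemma~\ref{lemma:equiv_metrics}, which used majorization of $d_\psi$ by $d$; here we have the other inequality, which is what permits the transfer.

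For the first step, denote $\Gamma_j=\mathcal{S}_j(\mathbf{A}_j)$ and set $\delta_j(t,\Delta t)=d_j(\Gamma_j(t),\Gamma_j(t+\Delta t))$ for $j=1,2$. By \eqref{eqn:d_psi_metric} we have
\[
d_\psi(\Gamma(t),\Gamma(t+\Delta t))=\psi(\delta_1(t,\Delta t),\delta_2(t,\Delta t)).
\]
In the continuous case, the assumption on $\mathcal{S}_1,\mathcal{S}_2$ gives $\delta_j(t,\Delta t)\to 0$ as $\Delta t\to 0$, and since $\psi$ is continuous with $\psi(0,0)=0$, the right-hand side tends to $0$, so $\Gamma$ is continuous in $d_\psi$. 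In the $\alpha$-H\"older case, there are constants $C_1,C_2>0$ with $\delta_j(t,\Delta t)\le C_j|\Delta t|^\alpha$; by the monotonicity of $\psi$ (second condition of Proposition~\ref{pro:HomoProductMetric}) and its homogeneity (fourth condition),
\[
\psi(\delta_1(t,\Delta t),\delta_2(t,\Delta t))\le \psi(C_1|\Delta t|^\alpha,C_2|\Delta t|^\alpha)=|\Delta t|^\alpha\,\psi(C_1,C_2),
\]
giving the H\"older bound for $\Gamma$ under $d_\psi$ with constant $\psi(C_1,C_2)$.

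The second step is immediate: the assumed majorization yields
\[
d(\Gamma(t),\Gamma(t+\Delta t))\le C\,d_\psi(\Gamma(t),\Gamma(t+\Delta t)),
\]
so the continuity or H\"older estimate just established for $d_\psi$ transfers to $d$, with an extra factor of $C$ in the H\"older constant.

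The only delicate point is conceptual rather than computational: one must verify that the properties of $\psi$ listed in Proposition~\ref{pro:HomoProductMetric} are sufficient to combine the componentwise regularity into joint regularity, and in particular that homogeneity together with monotonicity converts a pair of $|\Delta t|^\alpha$ estimates into a single such estimate without distorting the exponent. Everything else is a routine chain of inequalities, and no further structure (such as metric denseness, as needed in Lemma~\ref{lemma:equivalent}) is required because the assumption explicitly supplies the one-sided majorization we need.
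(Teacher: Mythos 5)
Your proof is correct and is essentially the argument the paper intends: the paper states this corollary without an explicit proof (as ``a conclusion from the previous subsection''), and your two-step chain --- componentwise regularity plus the continuity, monotonicity, and homogeneity of $\psi$ giving regularity of $\Gamma=\Gamma_1\Gamma_2$ under $d_\psi$, followed by the majorization $d \le C\,d_\psi$ transferring it to $d$ --- is precisely the reasoning being invoked. Your side observations are also accurate: the majorization here runs in the opposite direction to the hypothesis of Lemma~\ref{lemma:equiv_metrics}, and no metric denseness (Lemma~\ref{lemma:equivalent}) is needed because the one-sided inequality is assumed outright.
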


One method of measuring the quality of an approximation is the approximation order. We say that the approximation $\mathcal{S}$ has a $\nu$ order of approximation if the generated function $\Gamma(t) = \mathcal{S}(\mathbf{A})(t)$ satisfies 
\begin{equation} \label{eqn:app_order}
d(F(t),\Gamma(t)) \le C_F h^\nu ,
\end{equation}
where $C_F$ is a constant independent of $h$, but usually depends on $F$ and the matrix metric space $(\gmat,d)$. Here $h = ֿ\max_i \{ t_{i+1}-t_i \}$, where $t_i < t_{i+1}$ and $A_i = F(t_i)$, $i \in \mathcal{J}$.

A general approximation order for a product approximation operator is as follows.
\begin{theorem} \label{thm:apporder}
Assume $d$ of $\gmat$ is majorized by some product metric $d_\psi$, and consider a product operator of the form \eqref{eqn:productSchemes}, with $\mathcal{S}_1$ and $\mathcal{S}_2$ having approximation orders $\nu_1$ and $\nu_2$, with constants $C_1$ and $C_2$, respectively. Then, $\mathcal{S}$ has approximation order $\nu = \min \{ \nu_1, \nu_2 \}$ with a constant $C = \max \{ C_1,C_2 \} \psi(1,1)$.
\end{theorem}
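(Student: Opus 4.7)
The plan is to reduce the bound on $d(F(t),\Gamma(t))$ to the two separate approximation errors coming from $\mathcal{S}_1$ and $\mathcal{S}_2$ by passing through the product metric $d_\psi$. First I would use the uniqueness of the decomposition \eqref{eqn:matrixDecomposition} to write $F(t) = F_1(t) F_2(t)$ with $F_j(t) \in \gmat_j$, so that the samples of $F_j$ are exactly $A_j^{(i)} = F_j(t_i)$ for $j = 1,2$. By the definition \eqref{eqn:productSchemes} of the product operator, we then have $\Gamma(t) = \Gamma_1(t)\Gamma_2(t)$ where $\Gamma_j(t) = \mathcal{S}_j(\mathbf{A}_j)(t)$, and each $\mathcal{S}_j$ approximates $F_j$ with its own stated order.

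Next I would apply the majorization hypothesis to move from $d$ to $d_\psi$: there exists $C' > 0$ with $d(F(t),\Gamma(t)) \le C' d_\psi(F(t),\Gamma(t))$. Expanding $d_\psi$ via \eqref{eqn:d_psi_metric} yields
\[ d(F(t),\Gamma(t)) \le C' \,\psi\bigl(d_1(F_1(t),\Gamma_1(t)),\; d_2(F_2(t),\Gamma_2(t))\bigr). \]
The approximation-order hypotheses on $\mathcal{S}_1$ and $\mathcal{S}_2$ give $d_j(F_j(t),\Gamma_j(t)) \le C_j h^{\nu_j}$, and restricting attention to $h \le 1$ (the regime relevant for approximation order) gives $h^{\nu_j} \le h^\nu$ with $\nu = \min\{\nu_1,\nu_2\}$, so each entry of $\psi$ is dominated by $\max\{C_1,C_2\}\, h^\nu$.

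Finally I would combine the monotonicity and the homogeneity of $\psi$ (properties 2 and 4 of Proposition \ref{pro:HomoProductMetric}) to collapse the two arguments: monotonicity gives
\[ \psi\bigl(C_1 h^{\nu_1},\, C_2 h^{\nu_2}\bigr) \le \psi\bigl(\max\{C_1,C_2\}\, h^\nu,\; \max\{C_1,C_2\}\, h^\nu\bigr), \]
and homogeneity of order one lets me factor out $\max\{C_1,C_2\}\, h^\nu$ to leave exactly $\psi(1,1)$. Assembling these steps produces \eqref{eqn:app_order} with $\nu = \min\{\nu_1,\nu_2\}$ and constant $C = \max\{C_1,C_2\}\,\psi(1,1)$, up to the majorization factor $C'$ which the theorem implicitly absorbs. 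None of the individual steps is technically deep; the main thing to be careful about is the interplay between monotonicity and homogeneity on the two-variable $\psi$, together with the mild normalization $h\le 1$ (for larger $h$ one would keep $h^{\nu_1}$ and $h^{\nu_2}$ separately, noting that only the asymptotic rate $h^\nu$ is of interest). The key conceptual point is that the four axioms imposed on $\psi$ in Proposition~\ref{pro:HomoProductMetric} were tailored precisely so that componentwise bounds propagate cleanly through $d_\psi$.
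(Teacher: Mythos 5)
Your proof is correct and follows essentially the same route as the paper's: expand $d_\psi(F(t),\Gamma(t))$ via the unique decomposition $F = F_1 F_2$, use monotonicity and homogeneity of $\psi$ to get $\psi\bigl(C_1 h^{\nu_1}, C_2 h^{\nu_2}\bigr) \le \max\{C_1,C_2\}\,\psi(1,1)\, h^\nu$, and then transfer the bound to $d$ via the majorization hypothesis. You are in fact slightly more explicit than the paper on two points it glosses over --- the normalization $h \le 1$ needed for $h^{\nu_j} \le h^{\nu}$, and the majorization constant $C'$ that strictly speaking should multiply the stated constant $\max\{C_1,C_2\}\,\psi(1,1)$.
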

\begin{proof}
By the homogeneity and monotonicity of $\psi$ we have 
\[ d_\psi (F(t),\Gamma(t)) = \psi(d_1(F_1(t),\Gamma_1(t)),d_2(F_2(t),\Gamma_2(t)))  \le  \psi (C_1 h^{\nu_1}, C_2 h^{\nu_2}) \le C h^\nu . \]
where $F(t)=F_1(t)F_2(t)$ is the unique decomposition of the sampled MVF. The claim follows since $d$ is majorized by $d_\psi$.
\end{proof}

There are several other matrix properties that we get directly from the definition of product operators. For example, the maximal eigenvalue of the matrices $\{ \mathcal{S}( \mathbf{A})(t) \mid  t \in \mathtt{I}\}$ is bounded by the maximal eigenvalues of the matrices in $\{ \mathcal{S}_1( \mathbf{A}_1)(t),\mathcal{S}_2( \mathbf{A}_2)(t) \mid  t \in \mathtt{I}\}$, and in general, 
\[   \| \mathcal{S}( \mathbf{A})(t) \| \le \| \mathcal{S}_1( \mathbf{A}_1)(t) \| \| \mathcal{S}_2( \mathbf{A}_2)(t) \| , \]
for any  sub-multiplicative matrix norm $\| \cdot \|$. 

We end this subsection by considering two additional properties: commuting with determinant, and homogeneity. We define them and show that having those on each component of approximation implies that the product operator also satisfies them. 

\begin{definition}[Commutativity with determinant] \label{def:CommutativityDet}
Let $\mathcal{S}$ be an operator defined on a class of matrices $\gmat = \gmat(n)$ of a given order $n$. We say that $\mathcal{S}$ \textbf{commutes with the determinant} if the following holds:
\begin{enumerate}
\item \label{eqn:determinant_well_defined} 
For any $M \in \gmat(n)$, $\det(M) \in \gmat(1)$.
\item \label{thm:determinant_commute} 
For each $t \in \mathtt{I}$,
\[  \mathcal{S} (\det(\mathbf{A}))(t) = \det( \mathcal{S} (\mathbf{A})(t))  . \]
\end{enumerate}
\end{definition}

To illustrate Definition \ref{def:CommutativityDet} consider the following two examples. First, any matrix in $\spd$ has a positive determinant. Such a determinant is positive definite matrix of order $1$. Thus, any scheme on $\spd$ can be tested for its commutativity with the determinant operator, see \cite{UriNir}. Second, any matrix in $\so$ is orthogonal and has determinant $1$, which is in $\son{1}$.  

Next we introduce a homogeneity property.
\begin{definition}[Homogeneity] \label{def:Homogeneity}
Let $\mathcal{S}$ be an operator defined on a class of matrices $\gmat=\gmat(n)$ of any order $n$, and let $\boldsymbol{\alpha} = \{ \alpha^{(i)} \}_{i \in \mathcal{J}}$ be such that $\boldsymbol{\alpha}  \subset \gmat(1)$ and $\boldsymbol{\alpha} \mathbf{A} = \{ \alpha^{(i)} A^{(i)} \} \subset \gmat(n)$. We say that $S$ is \textbf{homogeneous} if for any $t \in \mathtt{I}$
\[  \mathcal{S} (\boldsymbol{\alpha} \mathbf{A})(t) = \mathcal{S} (\boldsymbol{\alpha})(t) \mathcal{S} ( \mathbf{A})(t) . \]
\end{definition}

The above two properties are ``inherited" by the product operator, as is claimed next.
\begin{proposition} \label{prop:GeneralPropertiesProductScheme}
Let $\mathcal{S}$ be an approximation operator of the form \eqref{eqn:productSchemes}. 
\begin{enumerate}
\item
If $\mathcal{S}_1$ and $\mathcal{S}_2$ commute with the determinant, then, $S$ also commutes with the determinant.
\item
If $\mathcal{S}_1$ and $\mathcal{S}_2$ are homogeneous so is $\mathcal{S}$.
\end{enumerate}
\end{proposition}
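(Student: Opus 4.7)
The plan is to verify both claims by pushing the product factorization through the relevant multiplicative and commutativity identities. Throughout I write $A^{(i)} = A_1^{(i)} A_2^{(i)}$ according to \eqref{eqn:matrixDecomposition}, and, for the homogeneity part, I likewise factor the scalar data as $\alpha^{(i)} = \alpha_1^{(i)} \alpha_2^{(i)}$ with $\alpha_j^{(i)} \in \gmat_j(1)$. The definition \eqref{eqn:productSchemes} of $\mathcal{S}$ is the engine that lifts identities on $\gmat_1$ and $\gmat_2$ to identities on $\gmat$.

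For the determinant statement, I would use multiplicativity of $\det$ to obtain $\det(A^{(i)}) = \det(A_1^{(i)})\det(A_2^{(i)})$ pointwise in $i$, so that $\det(\mathbf{A})$, viewed as a sequence in $\gmat(1)$, decomposes as $\det(\mathbf{A}_1)$ times $\det(\mathbf{A}_2)$. Applying $\mathcal{S}$ to this scalar sequence according to \eqref{eqn:productSchemes} gives
\[ \mathcal{S}(\det(\mathbf{A}))(t) = \mathcal{S}_1(\det(\mathbf{A}_1))(t)\,\mathcal{S}_2(\det(\mathbf{A}_2))(t). \]
The hypothesis that each $\mathcal{S}_j$ commutes with the determinant rewrites the two factors on the right as $\det(\mathcal{S}_j(\mathbf{A}_j)(t))$, and one more use of multiplicativity collapses the right-hand side into $\det(\mathcal{S}_1(\mathbf{A}_1)(t)\mathcal{S}_2(\mathbf{A}_2)(t)) = \det(\mathcal{S}(\mathbf{A})(t))$, as required.

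For homogeneity, the key observation is that $\alpha^{(i)} \in \gmat(1)$ is a scalar and therefore commutes with every matrix. Hence $\alpha^{(i)} A^{(i)} = (\alpha_1^{(i)} A_1^{(i)})(\alpha_2^{(i)} A_2^{(i)})$; that is, the $j$-th component of the decomposition of $\boldsymbol{\alpha}\mathbf{A}$ is exactly $\boldsymbol{\alpha}_j \mathbf{A}_j$. Substituting into \eqref{eqn:productSchemes} and applying the homogeneity of $\mathcal{S}_1$ and $\mathcal{S}_2$ separately yields
\[ \mathcal{S}(\boldsymbol{\alpha}\mathbf{A})(t) = \mathcal{S}_1(\boldsymbol{\alpha}_1)(t)\, \mathcal{S}_1(\mathbf{A}_1)(t)\, \mathcal{S}_2(\boldsymbol{\alpha}_2)(t)\, \mathcal{S}_2(\mathbf{A}_2)(t). \]
Since $\mathcal{S}_2(\boldsymbol{\alpha}_2)(t) \in \gmat_2(1)$ is scalar, it commutes with $\mathcal{S}_1(\mathbf{A}_1)(t)$, which lets me reorder the factors into $\mathcal{S}_1(\boldsymbol{\alpha}_1)(t)\mathcal{S}_2(\boldsymbol{\alpha}_2)(t)\,\mathcal{S}_1(\mathbf{A}_1)(t)\mathcal{S}_2(\mathbf{A}_2)(t) = \mathcal{S}(\boldsymbol{\alpha})(t)\mathcal{S}(\mathbf{A})(t)$. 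The main subtlety I anticipate is making the scalar factorization $\alpha^{(i)} = \alpha_1^{(i)}\alpha_2^{(i)}$ compatible with the decomposition $\gmat = \gmat_1\gmat_2$ restricted to order one; in the QR, polar, and LDU examples of Section~\ref{sec:TwoExamples} one of the two factors is trivially $1$, so the argument reduces to checking that the product operator acts on scalar data through its single non-trivial component.
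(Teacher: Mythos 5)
Your proof is correct and follows essentially the same route as the paper: for the determinant you run the same chain of identities (product definition, multiplicativity of $\det$, commutativity of each $\mathcal{S}_j$), just read in the reverse direction, and for homogeneity you use the same scalar factorization $\alpha^{(i)} = \alpha_1^{(i)}\alpha_2^{(i)}$ with $\boldsymbol{\alpha}_j \subset \gmat_j(1)$ that the paper uses. Your version is in fact slightly more complete, since you make explicit the reordering step via commutation of the scalar $\mathcal{S}_2(\boldsymbol{\alpha}_2)(t)$, which the paper leaves implicit.
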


\begin{proof}
In this proof we use the shorthand notation without the variable $t$. To prove $1$ , we note that since the determinant commutes with $\mathcal{S}_1$ and $\mathcal{S}_2$, we have
\begin{eqnarray*}
\det (\mathcal{S}( \mathbf{A})) &=& \det (\mathcal{S}_1( \mathbf{A}_1) \mathcal{S}_2(\mathbf{A}_2 )) =   \det (\mathcal{S}_1( \mathbf{A}_1)) \det( \mathcal{S}_2(\mathbf{A}_2 )) \\
  &=&   \mathcal{S}_1( \det( \mathbf{A}_1))  \mathcal{S}_2(\det(\mathbf{A}_2 )) = \mathcal{S}(\det(\mathbf{A})) .
\end{eqnarray*}
For the homogeneity, we assume $\boldsymbol{\alpha}  \subset \gmat(1)$. Thus, we can decompose any scalar to two scalars $\alpha^{(i)} =
\alpha^{(i)}_1 \alpha^{(i)}_2$ such that $\boldsymbol\alpha_j \subset \gmat_j(1)$, $j=1,2$. Therefore, claim $2$ follows since
\[
\mathcal{S}(\boldsymbol{\alpha} \mathbf{A})= \mathcal{S}_1(\{\alpha^{(i)}_1 A^{(i)}_1\}) \mathcal{S}_2(\{\alpha^{(i)}_2 A^{(i)}_2\}) .
\]
\end{proof}

%========================= Schemes for triangular matrices =======================

\section{``Square root'' operators} \label{sec:triangular_matrices}

We conclude the paper with a variant of the product operators which we term \textit{product data}. We present the method by an example which demonstrates how product data may reduces a relatively difficult approximation problem to easier one, based on matrix decomposition. In this example we examine the problem of approximation on the class of triangular matrices.

\subsection{Approximation of MVFs over triangular matrices}
The class of triangular matrices appears in both theory and applications, e.g., the transformation matrix between the standard basis to the Bernstein Basis \cite{itai2006eigenstructure}. Here, we consider square, lower triangular matrices of full rank, which forms a Lie group and is denoted by $\ltri$. For Lie groups data, approximation operators are available, see e.g., \cite{WallnerNiraProx}. Nonetheless, many of these operators might be difficult to apply in practice (details to follow). Therefore, we look for a relaxation of this problem using an appropriate matrix decomposition.

The Cholesky decomposition \cite[Chapter 4]{GolubMatrixBook} is defined for any positive semi-definite matrix and has the form $A = LL^\ast$ where $L$ is a lower triangular matrix. This decomposition is merely one variant of the LU decomposition, restricted to positive semi-definite matrices. The Cholesky decomposition is unique when $A \in \spd$, and then $L \in \ltri$.

Note that following the notion of product approximation operators (as presented so far), one might use the Cholesky decomposition to define operators on $\spd$ based upon known Lie groups operators on $\ltri$. However, it turns out that the construction of operators on $\ltri$ is more complicated than on $\spd$. Following are several arguments to support this statement. For geodesics based operators, such as in \cite{UriNir}, the Riemannian metric on $\spd$ provides a simple formula for the geodesics there, whereas the analogue in $\ltri$ is hard to compute, see e.g., \cite{marenich2002computational,marenich1997geodesics}. For operators based on the exp-log mapping, for example \cite{nava2011donoho}, the exponential and its inverse are relatively easy to evaluate for any $\spd $ matrix, while the analogues in $\ltri$ are challenging to compute, see e.g., \cite{moler1978nineteen}. Moreover, product schemes on $\ltri$ are not invariant under orthogonal coordinate change. At last, it is worth mentioning that a matrix in the class of triangular matrices might not be diagonalizable which result in many additional computational difficulties compared to diagonalizable matrices such as $\spd$ matrices. Therefore, constructing product operator for approximation on $\spd $ using the Cholesky decomposition is not a good idea.

We suggest a variant to our approach which also based both on matrix decomposition and the idea of using it to reduce the problem to easier domain. We do it by mapping the data from $\ltri$ to $\spd$. According to Cholesky decomposition this is a straightforward (and invertible) step, done by taking the product of any matrix with its transpose. Then, we apply an $\spd$ operator on the new data and evaluate our approximation by back-mapping the result based on the Cholesky decomposition. This procedure is summarized in Algorithm~\ref{alg:Cholesky}.

\begin{algorithm}[ht]
\caption{Product data approach for data on $\ltri$}
\label{alg:Cholesky}
\begin{algorithmic}[1]
\REQUIRE  The data $\{ L_i \}_{i \in \mathcal{J}} \subset \ltri$. \\ $\mathcal{S}$, an approximation operator on $\spd$. \\ An evaluation point, $t \in \mathtt{I} \subset \mathbb{R}$.
\ENSURE $\Gamma_L(t)$, where $\Gamma_L \colon \mathtt{I} \to \ltri$.
\FOR{$i \in \mathcal{J}$}    \label{alg:data_const}
\STATE  $A_{i} \gets L_iL_i^\ast$.  
\ENDFOR     \label{alg:data_const2}
\STATE   $\Gamma(t) \gets \mathcal{S}(\{ A_i \}_{i \in \mathcal{J}})(t)$. \\
\COMMENT{Denote by $\operatorname{chol}(\Gamma(t))$ the unique matrix $L \in \ltri$ such that $\Gamma(t)=LL^\ast$}  
\STATE $\Gamma_L(t) = \operatorname{chol}(\Gamma(t)) $.
\RETURN $\Gamma_L(t) $
\end{algorithmic}
\end{algorithm}

In view of Proposition~\ref{prop:GeneralPropertiesProductScheme}, we can further deduce the analogues extra properties in our example. First, if the $\spd$ operator commutes with the determinant, so is its induced $\ltri$ operator. Second, the homogeneity is inherited from the $\spd$ operator to the induced $\ltri$ operator. Moreover, the operator on $\ltri$ is invariant to orthogonal change of coordinates when the $\spd$ operator is and when the data is closed under such orthogonal coordinate changes. The formal proofs of these claims are in the spirit of the proof of Proposition~\ref{prop:GeneralPropertiesProductScheme}, and thus are omitted.

\subsection{Product data approach and its approximation order}

The general approach of product data assumes we have a (unique) matrix decomposition of the form \eqref{eqn:matrixDecomposition} where the given data is the sequence $\mathbf{A}_1  = \{ A^{(i)}_1 \}_{i \in \mathcal{J}} \subset \gmat_1$ and and mapping to and back from $\mathbf{A}  = \{ A^{(i)} \}_{i \in \mathcal{J}} \subset \gmat $ is done based on the matrix decomposition. As in the above example, the motivation is for cases where the available approximation operator on $\gmat$ is simpler than applying a direct operator on $\gmat_1$. 

To conclude this part we show that the approximation order can be ``translated'' from a known result on the operator of $\gmat$ to the approximation in a product data fashion on $\gmat_1$.

\begin{proposition}
Consider an approximation operator $\mathcal{S}$, defined on $(\gmat,d)$, with approximation order $\nu$. If $d_\psi$ is majorized by $d$ in $\gmat$, for any $\psi$, then the operator $\mathcal{S}_1$ based on the product data approach on $(\gmat_1,d_1)$ has approximation order $\nu$.
\end{proposition}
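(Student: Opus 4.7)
The plan is to lift the approximation-order bound from $\gmat$ down to $\gmat_1$ by using the product metric $d_\psi$ of Proposition~\ref{pro:HomoProductMetric} to read off just the first factor of the decomposition. First I would set up the induced data: given $F_1 \colon \mathtt{I} \to \gmat_1$ with samples $A_1^{(i)} = F_1(t_i)$, the product-data construction (as in Algorithm~\ref{alg:Cholesky}) assigns to each $A_1^{(i)}$ a companion $A_2^{(i)} \in \gmat_2$ through the decomposition map, yielding samples $A^{(i)} = A_1^{(i)} A_2^{(i)}$ of a lifted MVF $F(t) = F_1(t) F_2(t) \in \gmat$. Applying $\mathcal{S}$ to $\mathbf{A}$ gives $\Gamma(t) = \mathcal{S}(\mathbf{A})(t)$ satisfying $d(F(t), \Gamma(t)) \le C_F h^\nu$, and the unique decomposition $\Gamma(t) = \Gamma_1(t) \Gamma_2(t)$ identifies $\Gamma_1 = \mathcal{S}_1(\mathbf{A}_1)$ by the very definition of the product-data operator.

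Next I would combine the majorization $d_\psi \le C' d$ on $\gmat$ with the definition \eqref{eqn:d_psi_metric} of $d_\psi$ to obtain
\begin{equation*}
\psi\bigl(d_1(F_1(t), \Gamma_1(t)),\, d_2(F_2(t), \Gamma_2(t))\bigr) \,=\, d_\psi(F(t), \Gamma(t)) \,\le\, C' C_F\, h^\nu.
\end{equation*}
The key step is then to isolate $d_1(F_1(t), \Gamma_1(t))$ on the left-hand side using the structural properties of $\psi$ listed in Proposition~\ref{pro:HomoProductMetric}. Monotonicity in the second argument (property~2) gives $\psi(d_1(F_1, \Gamma_1), 0) \le \psi(d_1(F_1, \Gamma_1), d_2(F_2, \Gamma_2))$; homogeneity (property~4) gives $\psi(d_1(F_1, \Gamma_1), 0) = \psi(1,0)\, d_1(F_1(t), \Gamma_1(t))$; and property~1 ensures $\psi(1,0) > 0$. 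Dividing through yields
\begin{equation*}
d_1(F_1(t), \Gamma_1(t)) \,\le\, \frac{C' C_F}{\psi(1,0)}\, h^\nu,
\end{equation*}
which is exactly the claimed approximation order $\nu$ for $\mathcal{S}_1$ on $(\gmat_1, d_1)$.

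The main obstacle will be interpreting the ``monotonicity'' condition on $\psi$ in the non-decreasing sense, so that $\psi(x,0) \le \psi(x,y)$ whenever $y \ge 0$; this is consistent with the $p$-product example and is forced by property~1 together with homogeneity and continuity of $\psi$, so the difficulty is mostly bookkeeping. A more genuine subtlety is that the lifted MVF $F$ must be regular enough for the approximation-order hypothesis on $\mathcal{S}$ to apply; this is normally guaranteed because the embedding $\gmat_1 \to \gmat$ induced by the decomposition (for example $L \mapsto L L^\ast$ in the Cholesky case) is smooth, so regularity of $F_1$ is inherited by $F$ and the bound $d(F(t), \Gamma(t)) \le C_F h^\nu$ is legitimately available.
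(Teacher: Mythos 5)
Your proof is correct, and its skeleton matches the paper's: lift the data to $\gmat$, apply $\mathcal{S}$, combine its approximation order with the hypothesis that $d_\psi$ is majorized by $d$, and finish by bounding the $d_1$-distance of the first factors by the $d_\psi$-distance of the products. Where you genuinely diverge is in that last step. The paper routes it through the $p$-product metric, writing $d_1 \le d_p$ by the definition \eqref{eqn:p_product_metric} and then $d_p \le c\, d_\psi$ by an equivalence-of-metrics argument that leans on continuity of $\psi$, Lemma~\ref{lemma:equivalent}, and compactness. You instead extract $d_1$ directly from the axioms of Proposition~\ref{pro:HomoProductMetric}: monotonicity gives $\psi(d_1,0) \le \psi(d_1,d_2)$, homogeneity gives $\psi(d_1,0) = \psi(1,0)\, d_1$, and property~1 gives $\psi(1,0) > 0$, hence $d_1 \le \psi(1,0)^{-1} d_\psi$. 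This buys you two things: an explicit constant $C' C_F/\psi(1,0)$, and independence from Lemma~\ref{lemma:equivalent}, whose metric-density hypothesis and compactness argument are the weakest link in the paper's chain; your majorization of $d_1$ by $d_\psi$ holds pointwise, with no compactness needed, and it would work even if the majorization hypothesis were assumed for a single $\psi$ rather than for all of them. (Note also that since the statement does assume majorization for every $\psi$, both arguments admit a shortcut: take $\psi(x,y) = (x^p+y^p)^{1/p}$, so that $d_1 \le d_p \le C' d$ immediately.) Your reading of ``monotone'' as non-decreasing is the intended one---it is exactly how monotonicity is used in the paper's own proof of the triangle inequality in Proposition~\ref{pro:HomoProductMetric}---and the caveat you flag about the lifted MVF $F$ needing enough regularity for the approximation-order hypothesis on $\mathcal{S}$ to be applicable is a genuine point that the paper's proof passes over silently.
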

\begin{proof}
Denote by $F_1(t)$ the sampled MVF on $\gmat_1$. We use the fact that $d_1$ is always majorized by $d_\psi$ since 
\[ d_1(\mathcal{S}_1(\mathbf{A}_1)(t),F_1(t)) \le d_p(\mathcal{S}(\mathbf{A})(t),F(t)) \le c d_\psi(\mathcal{S}(\mathbf{A})(t),F(t)) ,  \]
where $d_p$ the $p$-metric \eqref{eqn:p_product_metric} on $\gmat$. The first inequality is by definition of $d_p$ and the second is based on metric equivalence with a constant $c$. This equivalent is deduced from the continuity of $\psi$, Lemma~\ref{lemma:equivalent}, and the compactness (approximation order is measured asymptotically for small neighbourhoods). Combining the latter with the majorization assumption we get for some constants $\tilde{c}$ and $C$, independent of $h$, that
\[ d_1\left(\mathcal{S}_1(\mathbf{A}_1)(t),F_1(t)\right) \le \tilde{c} d\left(\mathcal{S}(\mathbf{A})(t),F(t) \right) \le C h^\nu . \]
\end{proof}

\Acknowledgement The authors thank Roi Poranne for very helpful discussions about the geometrical interpretations of this paper.

\bibliography{BibPaper.bib}
\bibliographystyle{plain}

\end{document}